\documentclass[a4paper,11pt]{amsart}
\usepackage{hyperref,fancyhdr,mathrsfs,amsmath,amscd,amsthm,amsfonts,latexsym,amssymb,stmaryrd}
\usepackage[all]{xy}

\voffset 3mm
\topmargin 10mm
\evensidemargin  5mm
\oddsidemargin  5mm
\textwidth  145mm
\textheight 205mm
\headsep 5mm
\marginparsep 2mm
\marginparwidth 20mm
\footskip 0mm
\headheight 5mm

\linespread{1.1}

\pagestyle{fancy}
\lhead[{\footnotesize \thepage}]{\footnotesize QUATERNIONIC MANIFOLDS WITH FANO TWISTOR SPACES}
\chead[]{}
\rhead[\footnotesize Radu Pantilie]{\footnotesize \thepage}
\lfoot{}
\cfoot{}
\rfoot{}

\newcommand{\ol}{\mathcal{O}}

\def \phi{\varphi}
\def \Phi{\varPhi}
\def \p{\pi}

\def \t{\tau}

\def \Hq{\mathbb{H}\,}
\def \C{\mathbb{C}\,}

\def\widecheckg{g^{\hspace*{-2.5pt}\vbox to 5pt{\hbox to
0pt{\LARGE$\check{}$}}}\hspace*{2pt}}

\def\widecheckl{\lambda^{\hspace*{-3.5pt}\vbox to 8pt{\hbox to
0pt{\LARGE$\check{}$}}}\hspace*{2pt}}

\hyphenation{pro-duct}

\begin{document}

\title{On the quaternionic manifolds whose\\
twistor spaces are Fano manifolds}
\author{Radu Pantilie} 
\thanks{The author acknowledges partial financial support from the Romanian National Authority for Scientific Research, CNCS-UEFISCDI, project no.\ 
PN-II-ID-PCE-2011-3-0362.} 
\email{\href{mailto:radu.pantilie@imar.ro}{radu.pantilie@imar.ro}}
\address{R.~Pantilie, Institutul de Matematic\u a ``Simion~Stoilow'' al Academiei Rom\^ane,
C.P. 1-764, 014700, Bucure\c sti, Rom\^ania}
\subjclass[2010]{Primary 53C28, Secondary 53C26}
\keywords{quaternionic manifolds}

\newtheorem{thm}{Theorem}[section]
\newtheorem{lem}[thm]{Lemma}
\newtheorem{cor}[thm]{Corollary}
\newtheorem{prop}[thm]{Proposition}

\theoremstyle{definition}

\newtheorem{defn}[thm]{Definition}
\newtheorem{rem}[thm]{Remark}
\newtheorem{exm}[thm]{Example}

\numberwithin{equation}{section}

\begin{abstract}
Let $M$ be a quaternionic manifold, $\dim M=4k$\,, whose twistor space is a Fano manifold. We prove the following:\\
\indent
(a) $M$ admits a reduction to ${\rm Sp}(1)\times{\rm GL}(k,\Hq)$ if and only if $M=\Hq\!P^k$,\\
\indent
(b) either $b_2(M)=0$ or $M={\rm Gr}_2(k+2,\C)$\,.\\
\noindent
This generalizes results of S.~Salamon and C.~R.~LeBrun, respectively, who obtained the same conclusions under the assumption
that $M$ is a complete quaternionic-K\"ahler manifold with positive scalar curvature.
\end{abstract}

\maketitle
\thispagestyle{empty}
\vspace{-8mm}

\section{Introduction}

An \emph{almost quaternionic structure} on a manifold $M$ is a reduction of its frame bundle to ${\rm Sp}(1)\cdot{\rm GL}(k,\Hq)$\,.
Then the obstruction for $M$ to admit a `reduction' to ${\rm Sp}(1)\times{\rm GL}(k,\Hq)$ is an element of $H^2(M,\mathbb{Z}_2)$ \cite{MarRom-r}\,.
Equivalently, this is the second Stiefel-Whitney class of the oriented Riemannian vector bundle $Q$ induced by the Lie groups morphism
${\rm Sp}(1)\cdot{\rm GL}(k,\Hq)\to{\rm SO}(3)$\,, $\pm(a,A)\mapsto\pm a$\,.\\
\indent
If $\dim M\geq8$ then the almost quaternionic structure is \emph{integrable} if there exists a torsion free connection on $M$ which is
compatible (with the structural group) \cite{Sal-dg_qm}\,. Equivalently (see \cite{IMOP}\,), there exists a compatible connection $\nabla$ on $M$
such that the almost complex structure induced by $\nabla$ on the sphere bundle $Z$ of $Q$ is integrable. Then the complex manifold $Z$
is the \emph{twistor space} of $M$ and the fibres of $\p:Z\to M$ are the `real' \emph{twistor lines}; furthermore, $Z$ is endowed with a conjugation
(given by the antipodal map on the fibres of $\p$). Conversely, $Z$ together with its conjugation and a real twistor line determines $M$
(see \cite{Pan-twistor_(co-)cr_q}\,). Furthermore, by \cite{Sal-dg_qm} and \cite{PePoSw-98}\,, there exists a holomorphic line bundle
$\mathcal{L}$ over $Z$ whose restriction to any twistor line has Chern number $2$\,. It follows quickly that $M$ admits a reduction to
${\rm Sp}(1)\times{\rm GL}(k,\Hq)$ if and only if $\mathcal{L}$ admits a square root.\\
\indent
Further natural restrictions can be obtained by assuming that there exists a Riemannian metric on $M$ for which the holonomy group of its
Levi-Civita connection is contained by ${\rm Sp}(1)\cdot{\rm Sp}(k)$\,; then $M$ is called \emph{quaternionic-K\"ahler}.
It follows \cite{Sal-Inventiones1982} that any quaternionic-K\"ahler manifold is an Einstein manifold, and, assuming, further, completeness
and the scalar curvature positive, the corresponding twistor space is a Fano manifold.
Also, by \cite[Theorem 6.3]{Sal-Inventiones1982}\,, $\Hq\!P^k$ is the only such quaternionic-K\"ahler manifold
which admits a reduction to ${\rm Sp}(1)\times{\rm GL}(k,\Hq)$\,.\\
\indent
Another result, in the same vein, is \cite{LeB-top_qK} that for any complete quaternionic-K\"ahler manifold $M$ with positive scalar curvature
we have that either its second Betti number $b_2(M)$ is zero, or $M$ is the Grassmannian ${\rm Gr}_2(k+2,\C)$\,, where $\dim M=4k$\,.\\
\indent
In this paper, we generalize these two results of \cite{Sal-Inventiones1982} and \cite{LeB-top_qK}\,, respectively, to the class of quaternionic manifolds
whose twistor spaces are Fano manifolds.\\
\indent
I am very grateful to Paltin Ionescu for an illuminating
exchange of e-mail messages, during which he generously provided, for example, the proof of Proposition \ref{prop:with_cc}\,. I am, also,
grateful to Liviu Ornea and to Victor Vuletescu for useful comments on a preliminary version of the paper.

\section{The results}

\indent 
As the four-dimensional case was elucidated in \cite{Hit-Kaehler_twist}\,, we consider only quaternionic manifolds of dimension 
at least $8$\,.\\  
\indent
The following result generalizes \cite[Theorem 6.3]{Sal-Inventiones1982}\,.

\begin{thm} \label{thm:qrFt}
Let $M$ be a quaternionic manifold, $\dim M=4k\geq8$\,, which admits a reduction to ${\rm Sp}(1)\times{\rm GL}(k,\Hq)$\,;
denote by $Z$ the twistor space of $M$.\\
\indent
Then the following assertions are equivalent:\\
\indent
\quad{\rm (i)} $M=\Hq\!P^k$;\\
\indent
\quad{\rm (ii)} $M$ is simply-connected, $b_2(M)=0$ and $Z$ is projective (that is, $Z$ can be embedded as a compact complex submanifold of a complex projective space);\\
\indent
\quad{\rm (iii)} $Z$ is a Fano manifold (that is, $Z$ is compact and its anticanonical line bundle is ample).
\end{thm}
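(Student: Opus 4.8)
\smallskip
\noindent\emph{Strategy of proof.}
The plan is to establish the cyclic chain of implications (i)$\,\Rightarrow\,$(ii)$\,\Rightarrow\,$(iii)$\,\Rightarrow\,$(i). The first of these, (i)$\,\Rightarrow\,$(ii), is immediate, since $\Hq\!P^k$ is simply-connected with $H^2(\Hq\!P^k;\mathbb{Z})=0$, and its twistor space is $\C\!P^{2k+1}$, which is projective.

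For (ii)$\,\Rightarrow\,$(iii) we first compute $b_2(Z)$. The twistor projection $\p\colon Z\to M$ is the unit sphere bundle of the oriented rank-$3$ Riemannian bundle $Q$; since $Q$ has odd rank, its Euler class is $2$-torsion, hence rationally zero, so the Gysin sequence of the oriented $S^2$-bundle $\p$ gives $b_2(Z)=b_2(M)+1=1$. As $Z$ is projective, its real N\'eron--Severi space --- and therefore also $N_1(Z)_{\R}$ --- is one-dimensional. Now a twistor line has normal bundle $\ol(1)^{2k}$ in $Z$, so it is an effective rational curve $C$ with $-K_Z\cdot C=2+2k>0$; in particular its numerical class is nonzero, hence it generates the Mori cone $\overline{NE}(Z)$. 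Thus $-K_Z$ is positive on $\overline{NE}(Z)\setminus\{0\}$, and Kleiman's criterion shows that $-K_Z$ is ample. Therefore $Z$ is a Fano manifold.

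For (iii)$\,\Rightarrow\,$(i) the crucial step is to identify $Z$ itself. Here $Z$ is a Fano manifold of complex dimension $2k+1$, and through every point of $Z$ passes a twistor line $C$, which is a free rational curve --- its normal bundle $\ol(1)^{2k}$ is globally generated and has $H^1=0$ --- of anticanonical degree $-K_Z\cdot C=2+2k=\dim_{\C}Z+1$. By the Cho--Miyaoka--Shepherd-Barron characterization of projective space, $Z\cong\C\!P^{2k+1}$; and, having $\ol(1)$-degree $1$, every twistor line is a projective line. Next, the conjugation $\s$ of $Z$ is an antiholomorphic involution of $\C\!P^{2k+1}$ without fixed points, since it acts as the antipodal map on each fibre of $\p$; such an involution is conjugate, under ${\rm PGL}(2k+2,\C)$, to the standard quaternionic structure. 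Hence $(Z,\s)$ is the twistor space, together with its conjugation, of $\Hq\!P^k$, and the $\s$-invariant projective lines are exactly the projectivizations of the quaternionic lines of $\Hq^{k+1}$, forming $\Hq\!P^k$. As each twistor line of $M$ is one such line, the map $M\to\Hq\!P^k$, $m\mapsto\p^{-1}(m)$, is injective and, by the deformation theory of twistor lines, a local diffeomorphism; since $M$ is compact ($Z$ being so) and $\Hq\!P^k$ is connected, it is a diffeomorphism, under which $\p$ becomes the standard twistor fibration. Finally, as the quaternionic structure of $M$ is determined by $Z$, its conjugation and a twistor line, we conclude that $M=\Hq\!P^k$ with its standard quaternionic structure.

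I expect (iii)$\,\Rightarrow\,$(i) to be the main obstacle. Checking that the twistor lines meet the hypotheses of the projective-space characterization is routine, given their normal bundle and anticanonical degree; the delicate point is, once $(Z,\s)$ has been identified with the twistor data of $\Hq\!P^k$, to show that the family of twistor lines exhausts the whole $4k$-dimensional family of quaternionic lines. This is where the compactness of $M$, inherited from $Z$, together with the reconstruction of a quaternionic manifold from its twistor data, is used. By contrast, (ii)$\,\Rightarrow\,$(iii) is essentially formal once $b_2(Z)=1$ is known, and (i)$\,\Rightarrow\,$(ii) is a direct verification.
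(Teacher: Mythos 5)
Your implication (iii)$\Rightarrow$(i) contains a fatal error: the characterization of projective space you invoke is not the one Cho--Miyaoka--Shepherd-Barron prove. Their hypothesis is that \emph{every} rational curve through a (general) point of $X$ has anticanonical degree at least $\dim X+1$; the existence of a single dominating family of free rational curves of anticanonical degree $\dim X+1$ is not sufficient. The counterexample is supplied by this very paper: the twistor space $Z=P\bigl(T^*\C\!P^{k+1}\bigr)$ of ${\rm Gr}_2(k+2,\C)$ is a Fano manifold of dimension $2k+1$ whose twistor lines are free rational curves with normal bundle $2k\ol(1)$ and $-K_Z\cdot C=2k+2$, yet $Z\not\cong\C\!P^{2k+1}$ (it carries rational curves of much smaller anticanonical degree, e.g.\ lines in the fibres of $Z\to\C\!P^{k+1}$, of degree $k+1$). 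The tell-tale symptom is that your argument for (iii)$\Rightarrow$(i) never uses the standing hypothesis that $M$ reduces to ${\rm Sp}(1)\times{\rm GL}(k,\Hq)$; without that hypothesis the implication is false, precisely because of the Grassmannian case treated in Theorem \ref{thm:qFt}.

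The paper closes this gap by using the reduction hypothesis to manufacture an \emph{ample} holomorphic line bundle $\mathcal{L}$ on $Z$ restricting to $\ol(1)$ on each twistor line: one starts from the degree-one bundle $\mathcal{L}_1$ given by \cite{Sal-dg_qm}, \cite{PePoSw-98} and corrects it by a $(2k+2)$-th root of $K_Z\otimes\mathcal{L}_1^{2k+2}$, obtained through the Ward transform and the simple-connectivity of $M$. Ionescu's quasi-line criterion \cite[Corollary 2.4]{Paltin-2005} then applies: a projective manifold polarized by an ample line bundle of degree one on a quasi-line is $(\C\!P^n,\ol(1))$. It is this degree-one polarization, not the anticanonical degree alone, that forces $Z\cong\C\!P^{2k+1}$. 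Your remaining steps are sound: (i)$\Rightarrow$(ii) is fine; your Mori-cone/Kleiman version of (ii)$\Rightarrow$(iii) is a correct (and slightly different) alternative to the paper's Picard-group computation, modulo the small remark that $\overline{NE}(Z)$ is a ray rather than a line because $Z$ is projective; and the endgame identifying the fixed-point-free antiholomorphic involution of $\C\!P^{2k+1}$ with the standard quaternionic structure would work --- but only after $Z\cong\C\!P^{2k+1}$ has been legitimately established.
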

\begin{proof}
It is obvious that if (i) holds then both (ii) and (iii) are statisfied, as $Z=\C\!P^{2k+1}$ and $M=\Hq\!P^k$.\\ 
\indent 
Further, as the restriction of the holomorphic cotangent bundle to each
twistor line is $\ol(-2)\oplus2k\ol(-1)$\,, where $\ol(-1)$ is the tautological line bundle, essentially the same proof as for  
\cite[Proposition 2.2(ii)]{Hit-Kaehler_twist} implies that any holomorphic form of positive degree on $Z$ is zero.
Consequently, if $Z$ is projective, from the exact sequence of cohomology groups associated to the exact sequence of complex Lie groups 
$0\to\mathbb{Z}\to\C\!\to\C\setminus\{0\}\to0$ (determined by the exponential) we deduce that the Picard group ${\rm Pic}(Z)$ is isomorphic 
to $H^2(Z,\mathbb{Z})$\,. Furthermore, if (ii) holds then, also, $Z$ is simply-connected (by the homotopy exact sequence determined 
by the smooth bundle $Z\to M$), and, hence, ${\rm Pic}(Z)$ has no torsion. Also, as $b_2(Z)=b_2(M)+1$ (see \cite{LeB-top_qK}\,), 
${\rm Pic}(Z)$ has rank $1$. We have, thus, proved that ${\rm Pic}(Z)$ is isomorphic to $\mathbb{Z}$\,.\\ 
\indent 
Let $\mathcal{L}$ be the restriction to $Z$ of the dual of the tautological line bundle over the complex projective space in which $Z$ is embedded.
As both the restriction of $\mathcal{L}$ and of the anticanonical line bundle $K_Z^*$ of $Z$, to a twistor line, are positive we deduce that
$\bigl(K_Z^*\bigr)^p=\mathcal{L}^q$, for some positive integers $p$ and $q$\,. Thus, also $\bigl(K_Z^*\bigr)^p$ is very ample, and (ii)$\Longrightarrow$(iii) is proved.\\
\indent
To complete the proof it is sufficient to show that (iii)$\Longrightarrow$(i)\,.
We claim that, if (iii) holds, there exists a holomorphic line bundle $\mathcal{L}$ over $Z$ such that:\\
\indent
\quad(a) $\mathcal{L}$ is ample;\\
\indent
\quad(b) $\mathcal{L}$ restricted to each twistor line is (isomorphic to) $\ol(1)$\,.\\
\indent
Indeed, from the assumption that $M$ admits a reduction to ${\rm Sp}(1)\times{\rm GL}(k,\Hq)$\,, 
by \cite{Sal-dg_qm} and \cite{PePoSw-98} there exists a holomorphic line bundle $\mathcal{L}_1$ over $Z$
which satisfies condition (b)\,, above; moreover, $\mathcal{L}_1$ is endowed with a morphism of (real) vector bundles whose square is $-1$
and which is an anti-holomorphic diffeomorphism covering the conjugation of $Z$ (given, on each fibre of $Z\to M$, by the antipodal map).
We shall show that after tensorising, if necessary, $\mathcal{L}_1$ with a
holomorphic line bundle, whose restriction to each twitor line is trivial, we obtain a line bundle satisfying (a)\,.\\
\indent
For this, firstly, note that $K_Z^*\,(=\Lambda_{\C\!}^{2k+1}TZ)$ restricted to each twistor line is $\ol(2k+2)$\,.
Hence, $K_Z\otimes\mathcal{L}_1^{2k+2}$ restricted to each twistor line is trivial; moreover, this holomorphic line bundle is endowed with a conjugation
(that is, an involutive morphism of vector bundles which is an anti-holomorphic diffeomorphism) covering the conjugation of $Z$.
Therefore $K_Z\otimes\mathcal{L}_1^{2k+2}$ corresponds, through the Ward transform, to a (real) line bundle $L$ over $M$
endowed with an anti-self-dual connection (that is, a connection whose curvature form is such that its $(0,2)$-part,
with respect to any admissible linear complex structure on $M$, is zero).\\
\indent
As $M$ is simply-connected (because $Z$ is Fano and therefore simply-connected, and the fibres of the projection $Z\to M$ are connected),
$L$ is orientable and, hence, there exists a line bundle $L_1$
such that $L=L_1^{2k+2}$; furthermore, this isomorphism is connection preserving with respect to a unique anti-self-dual connection on $L_1$\,.
Hence, $L_1$ corresponds to a holomorphic line bundle $\mathcal{L}_2$ over $Z$ whose restriction to each twistor line is trivial, and such that
$K_Z\otimes\mathcal{L}_1^{2k+2}=\mathcal{L}_2^{2k+2}$.\\
\indent
Thus, since $K_Z^*$ is ample, $\mathcal{L}=\mathcal{L}_1\otimes\mathcal{L}_2^*$ satisfies (a) and (b)\,, above. Moreover, $\mathcal{L}$
is endowed with a morphism of vector bundles $\t$ whose square is $-1$ and which is an anti-holomorphic diffeomorphism covering the conjugation of $Z$.
Hence, $\t$ induces a linear complex structure $J$ on $H^0(Z,\mathcal{L})$ which anti-commutes with its canonical complex structure.\\
\indent
By \cite[Corollary 2.4]{Paltin-2005}\,, $Z$ is a complex projective space and the twistor lines are just the complex projective lines;
moreover, $Z$ is the projectivisation of the dual of $H^0(Z,\mathcal{L})$\,. Furthermore, $J$ induces on the dual $E$ of $H^0(Z,\mathcal{L})$ 
a linear quaternionic structure with respect to which the fibres of $Z\to M$ are those complex projective lines obtained through the complex projectivisation 
of the quaternionic vector subspaces of $E$ of real dimension $4$\,. Thus, $Z=PE$, $M$ is the quaternionic projective space $P_{\Hq\!}E$,
and $Z\to M$ is the canonical projection $PE\to P_{\Hq\!}E$. The proof is complete.
\end{proof}

\indent
The following result generalizes \cite[Theorem 1]{LeB-top_qK}\,.

\begin{thm} \label{thm:qFt}
Let $M$ be a quaternionic manifold, $\dim M=4k\geq8$\,, whose twistor space is a Fano manifold.\\
\indent
Then either $b_2(M)=0$ or $M={\rm Gr}_2(k+2,\C)$\,.
\end{thm}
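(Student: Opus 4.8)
The plan is to adapt LeBrun's twistor-theoretic argument from \cite{LeB-top_qK} to the present, purely quaternionic setting, replacing the quaternionic-K\"ahler hypothesis by the weaker assumption that $Z$ is Fano. Suppose $b_2(M)\neq0$. Then, as recorded in Theorem \ref{thm:qrFt}, $b_2(Z)=b_2(M)+1\geq2$, so $Z$ is a Fano manifold of Picard rank at least two. The first step is to produce a nontrivial holomorphic line bundle on $Z$ coming from $M$: since $Z$ is Fano it is simply-connected and $\mathrm{Pic}(Z)\cong H^2(Z,\mathbb{Z})$ is torsion-free of rank $b_2(M)+1$, while the twistor lines span a distinguished ray (the one on which $K_Z^*$ restricts to $\mathcal{O}(2k+2)$). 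I would decompose, up to roots, $K_Z^*$ against a basis of $\mathrm{Pic}(Z)$ and isolate a nef line bundle $\mathcal{F}$ whose restriction to each twistor line is trivial but which is nontrivial on $Z$; equivalently, by the Ward transform (anti-self-dual connections on $M$ versus holomorphic line bundles trivial on twistor lines, as used in the proof of Theorem \ref{thm:qrFt}), this corresponds to a nontrivial line bundle on $M$ with an anti-self-dual connection, hence to a nonzero class in $H^2(M,\mathbb{R})$.

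The second step is to use the extremal contraction / Mori theory that is available precisely because $Z$ is Fano. The line bundle $\mathcal{F}$ (or a suitable $K_Z^*$-twist of it) lies on the boundary of the nef cone, and by the base-point-free theorem it defines a morphism $f\colon Z\to Y$ with connected fibres onto a normal projective variety of strictly smaller dimension, contracting exactly the curves on which $\mathcal{F}$ is trivial. The key point is that the generic twistor line is contracted by $f$ (because $\mathcal{F}$ restricts trivially to it), so $f$ factors the twistor lines and descends, via $\p\colon Z\to M$, to a map on $M$; since the twistor lines through a generic point of $M$ sweep out the whole fibre of $\p$, the fibres of $f$ are unions of twistor lines, and $f$ is $\p$-related to a genuine map $\widehat f\colon M\to Y$. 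Combined with the conjugation on $Z$, which $f$ must respect, one gets strong constraints on the structure of $Y$ and of the contraction.

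The third step is to identify $Z$ itself. Having a Fano manifold $Z$ of Picard rank $\geq2$, fibred in projective lines over $M$ with a second extremal contraction $f$ whose fibres are also swept by these lines, puts $Z$ into the situation analysed by LeBrun: $Z$ must be a $\mathbb{P}^1$-bundle (or, after the contraction, realised inside a product), and chasing the two contractions together with the Chern numbers $\mathcal{O}(2)$ on the lines forces $Z$ to be the flag manifold that is the twistor space of $\mathrm{Gr}_2(k+2,\C)$, namely the projectivised (co)tangent bundle $\mathbb{P}(T^*\C\!P^{k+1})$, equivalently the incidence variety $\{(\ell,H): \ell\subset H\}\subset\C\!P^{k+1}\times(\C\!P^{k+1})^*$. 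One then invokes the reconstruction of $M$ from $(Z,\text{conjugation},\text{twistor line})$ (see \cite{Pan-twistor_(co-)cr_q}) to conclude $M=\mathrm{Gr}_2(k+2,\C)$ with its standard quaternionic structure. Alternatively, and perhaps more cleanly, once a nontrivial holomorphic line bundle $\mathcal{L}'$ on $Z$ with $\mathcal{O}(1)$ on the lines and a compatible $\t$ with $\t^2=\pm1$ is produced, one can again feed it into \cite[Corollary 2.4]{Paltin-2005} and the linear-algebra dichotomy ($\t^2=-1$ giving $\Hq\!P^k$, $\t^2=+1$ giving the Grassmannian) to read off $M$ directly.

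The main obstacle I expect is the second step: showing that the extremal contraction $f$ of $Z$ is actually compatible with the twistor fibration, i.e.\ that its fibres are genuinely unions of twistor lines and that it therefore descends to $M$. In the quaternionic-K\"ahler case LeBrun has the isometric $S^1$-action and the moment-map geometry to control this; here one has only the holomorphic-line-bundle / Ward-transform dictionary and the conjugation, so the argument that the ray of $\mathcal{F}$ in $\overline{NE}(Z)$ is distinct from the twistor-line ray, that its contraction exists with the right fibre dimension, and that conjugation-equivariance forces the fibres to be twistor-line-saturated, will require care. A subsidiary technical point is ruling out torsion and small-degree pathologies when extracting the root $\mathcal{L}_1$ versus $\mathcal{L}_2^{*}$ and when passing between $\mathrm{Pic}(M)$ and $\mathrm{Pic}(Z)$, but these should be handled exactly as in the proof of Theorem \ref{thm:qrFt}.
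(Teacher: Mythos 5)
Your overall strategy (adapt LeBrun's argument, with the Fano hypothesis replacing quaternionic-K\"ahler completeness) is the right one, and your first step matches the paper's. But the core of your plan --- step two --- contains a step that would fail. You propose to find a nef line bundle $\F$ on $Z$ that is \emph{trivial} on the twistor lines, contract the extremal face on which it vanishes, and conclude that the contraction collapses the twistor lines and descends to $M$. Test this against the case you are trying to reach: for $M={\rm Gr}_2(k+2,\C)$ the twistor space is the incidence variety $Z\subset\C\!P^{k+1}\times(\C\!P^{k+1})^*$, the twistor lines have bidegree $(1,1)$, and the line bundles trivial on them are the $\ol(a,-a)$, none of which is nef for $a\neq0$; the two actual extremal contractions of $Z$ are the projections to $\C\!P^{k+1}$ and its dual, and \emph{neither contracts the twistor lines}. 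So there is no contraction of the kind you describe, and no descent to $M$; the twistor fibration $Z\to M$ is not holomorphic and cannot be recovered as a Mori contraction. Your step three then rests on this nonexistent structure, and you yourself flag it as the main obstacle without resolving it.

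What the paper does instead is to avoid Mori contractions entirely and use the \emph{index}: as in Theorem \ref{thm:qrFt} one produces an ample $\mathcal{L}$ with $\mathcal{L}^{k+1}=K_Z^*$, restricting to $\ol(2)$ on twistor lines, and observes that $\mathcal{L}$ has a square root if and only if $M$ reduces to ${\rm Sp}(1)\times{\rm GL}(k,\Hq)$, which by Theorem \ref{thm:qrFt} forces $M=\Hq\!P^k$; otherwise $k+1$ is the maximal root order of $K_Z^*$. Since $\dim Z=2k+1$, this makes $Z$ a Fano manifold of large index with $b_2(Z)\geq2$, and Wie\'sniewski's classification \cite{Wies-2} gives exactly three candidates: $\C\!P^k\times Q_{k+1}$, $P(T^*\C\!P^{k+1})$, and $\C\!P^{2k+1}$ blown up along $\C\!P^{k-1}$. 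Your proposal omits two further essential points that this list forces on you: (a) the product $\C\!P^k\times Q_{k+1}$ must be excluded, which is a genuinely separate argument (Proposition \ref{prop:with_cc}: a rational curve with normal bundle $2k\ol(1)$ would be a conic, so $\C\!P^k\times Q_{k+1}$ would be conic-connected, contradicting \cite{PIonRus-cc}); and (b) the blow-up case must be handled (it leads locally to $\Hq\!P^k$, contradicting the maximality of $k+1$ as a root order). Without the classification theorem, the exclusion of the product, and the disposal of the blow-up case, the argument does not close.
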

\begin{proof}
Let $Z$ be the twistor space of $M$. Similarly to the proof of Theorem \ref{thm:qrFt}\,, we obtain a holomorphic line bundle $\mathcal{L}$ over $Z$
such that $\mathcal{L}^{k+1}=K_Z^*$. Furthermore, $\mathcal{L}$ admits a square root if and only if $M$ admits a reduction to
${\rm Sp}(1)\times{\rm GL}(k,\Hq)$\,. Therefore, by Theorem \ref{thm:qrFt}\,, either $M=\Hq\!P^k$ or $k+1$ is the greatest natural number $n$
for which $K_Z^*$ admits a $n$-th root. From now on, in this proof, we shall assume that the latter holds.\\
\indent
Now, just like in the proof of \cite[Theorem 1]{LeB-top_qK}\,, by using \cite{Wies-2}\,, we obtain that if $b_2(M)\neq0$ then (at least)
one of the following three statements holds:\\
\indent
\quad(i) $Z=\C\!P^k\times Q_{k+1}$\,, where $Q_{k+1}$ is the nondegenerate hyperquadric in $\C\!P^{k+2}$,\\
\indent
\quad(ii) $Z$ is the projectivisation of the holomorphic cotangent bundle of $\C\!P^{k+1}$,\\
\indent
\quad(iii) $Z$ is $\C\!P^{2k+1}$ blown up along $\C\!P^{k-1}$.\\
\indent
The fact that (i) cannot occur is a consequence of Proposition \ref{prop:with_cc}\,, below.\\
\indent
In the remaining two cases, it follows that $M$ can be locally identified (through quaternionic diffeomorphisms) with
${\rm Gr}_2(k+2,\C)$ or with $\Hq\!P^k$, respectively. By using that $M$ is compact and simply-connected, a standard argument shows
that either $M={\rm Gr}_2(k+2,\C)$ or $M=\Hq\!P^k$. As the latter leads to a contradiction, the proof is complete.
\end{proof}

\indent
The following result, also interesting in itself, was used in the proof of Theorem \ref{thm:qFt}\,.

\begin{prop}[\cite{Paltin-emails}] \label{prop:with_cc}
Let $Q_{k+1}$ be the nondegenerate hyperquadric in $\C\!P^{k+2}$. Then no open subset of $\C\!P^k\times Q_{k+1}$ can be the twistor
space of a quaternionic manifold.
\end{prop}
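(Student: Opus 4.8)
The plan is to argue by contradiction: suppose some open set $U\subseteq\C\!P^k\times Q_{k+1}$ is the twistor space of a quaternionic manifold $M$, $\dim M=4k$. The twistor lines through a point of $U$ form a $(2k)$-dimensional family of rational curves, and each has normal bundle $2k\,\ol(1)$ inside $U$ (equivalently, in $\C\!P^k\times Q_{k+1}$). So the first step is to analyse which rational curves $C$ in $X:=\C\!P^k\times Q_{k+1}$ can occur: writing $C$ via its two projections to $\C\!P^k$ and to $Q_{k+1}$, compute the bidegree and the splitting type of $T X|_C$, and impose that $N_{C/X}\cong 2k\,\ol(1)$ together with $\deg(K_X^*|_C)=2k+2$ (since the anticanonical bundle of any twistor space restricts to $\ol(2k+2)$ on a twistor line, a fact already used in the proof of Theorem~\ref{thm:qrFt}). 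Since $K_X^* = \ol_{\C\!P^k}(k+1)\boxtimes\ol_{Q_{k+1}}(k+1)$, writing $C$ of bidegree $(a,b)$ gives $(k+1)(a+b)=2k+2$, hence $a+b=2$; the cases $(a,b)\in\{(2,0),(0,2),(1,1)\}$ must each be examined, and the normal-bundle condition should eliminate all but (conjecturally) $(1,1)$, or show that even $(1,1)$ fails to give the required generic splitting $2k\,\ol(1)$ once we project to each factor.

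Concretely, for a curve of bidegree $(a,b)$ the restriction $T\C\!P^k|_C$ splits with total degree $(k+1)a$ over a curve of degree $a$ in $\C\!P^k$, and similarly $T Q_{k+1}|_C$ has total degree $(k+1)b$; one then checks whether $T X|_C = T\C\!P^k|_C \oplus T Q_{k+1}|_C$ can contain a trivial sub-line-bundle (coming from $T C$) with quotient $2k\,\ol(1)$. A degree-$0$ curve in a factor is a point, forcing that summand to be the trivial bundle of the appropriate rank; a degree-$1$ curve in $\C\!P^k$ is a line, with $T\C\!P^k|_{\mathrm{line}}=\ol(2)\oplus(k-1)\ol(1)$, and a degree-$1$ curve in $Q_{k+1}$ is a line on the quadric, with $T Q_{k+1}|_{\mathrm{line}}=\ol(2)\oplus(k-1)\ol(1)\oplus\ol$. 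So in case (iii) $(a,b)=(1,1)$ one gets $T X|_C=\ol(2)^{\oplus 2}\oplus 2(k-1)\ol(1)\oplus\ol$, whence $N_{C/X}=\ol(2)\oplus 2(k-1)\ol(1)\oplus\ol$, which is \emph{not} $2k\,\ol(1)$ — the spurious $\ol(2)$ and $\ol$ summands obstruct it. The remaining cases $(2,0)$ and $(0,2)$ force $C$ to lie in a single $\C\!P^k$-fibre or $Q_{k+1}$-fibre and are killed even faster, since then one whole factor contributes a trivial summand of rank $k$ or $k+1$ to $N_{C/X}$, again incompatible with $2k\,\ol(1)$.

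A subtlety is that the twistor lines need only be smooth rational curves with the prescribed normal bundle \emph{within the open set $U$}, not embedded in all of $X$; but openness of $U$ means the normal bundle computation is local and identical to the one in $X$, and a $2k$-dimensional family of rational curves sweeping out $U$ must have members of bounded degree (the degree is locally constant in a family and determined by the anticanonical pairing), so the bidegree bookkeeping above applies unchanged. The main obstacle I anticipate is handling case (iii) with full rigour: there one must argue not merely that the naive splitting type is wrong, but that \emph{no} curve of bidegree $(1,1)$ in $\C\!P^k\times Q_{k+1}$ — including ones whose image in $Q_{k+1}$ is a line lying in various ways on the quadric, and allowing the curve to be a graph of a non-constant map — can have the right normal bundle. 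This amounts to a careful study of how $T C\hookrightarrow T X|_C$ sits: for a graph of an isomorphism onto a line, $T C$ is the antidiagonal in $\ol(2)\oplus\ol(2)$, and the quotient is $\ol(2)\oplus 2(k-1)\ol(1)\oplus\ol$, still having an $\ol(2)$ and an $\ol$; for degenerate configurations the curve is reducible or non-reduced and cannot be a twistor line at all. Once every bidegree-$2$ case is excluded, no family of the required twistor lines exists in $U$, contradicting the hypothesis and completing the proof.
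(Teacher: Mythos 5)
Your proposal is correct, but it takes a genuinely different route from the paper in its second half. Both arguments share the same starting point: with $L=\p_1^*L_1\otimes\p_2^*L_2$ the Segre-type polarization one has $K_Y^*=L^{k+1}$, while a twistor line $t$ satisfies $K_Y^*|_t=\ol(2k+2)$ (normal bundle $2k\ol(1)$ plus $Tt=\ol(2)$), so $L|_t=\ol(2)$ and the bidegree $(a,b)$ satisfies $a+b=2$. From there the paper argues globally: $t$ is a conic in the embedding given by $L$ and a quasi-line, so its deformations make $Y$ conic-connected, which contradicts the Ionescu--Russo classification of conic-connected manifolds. You instead exclude each bidegree directly by a splitting-type computation, which is more elementary and self-contained (no deformation theory of quasi-lines, no classification theorem), at the cost of a short case analysis. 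Your computations are right and can be made fully rigorous with little extra effort: in cases $(2,0)$ and $(0,2)$ the curve lies in a fibre of one projection, so $N_{t/Y}$ has a trivial direct summand of rank $k+1$ or $k$, which $2k\ol(1)$ does not; in case $(1,1)$ the curve is the graph of an isomorphism between a line $\ell_1\subset\C\!P^k$ and a line $\ell_2\subset Q_{k+1}$ (all lines on a smooth quadric are projectively equivalent, so there are no ``special'' configurations to worry about), $Tt$ lands in the $\ol(2)\oplus\ol(2)$ summand because $\operatorname{Hom}(\ol(2),\ol(1))=\operatorname{Hom}(\ol(2),\ol)=0$, and the quotient is $\ol(2)\oplus(2k-2)\ol(1)\oplus\ol$; by uniqueness of the Grothendieck splitting this is not $2k\ol(1)$ (the quickest clean argument: $N_{t/Y}$ surjects onto $N_{\ell_2/Q_{k+1}}\cong(k-1)\ol(1)\oplus\ol$, hence onto $\ol$, whereas $2k\ol(1)$ admits no nonzero map to $\ol$). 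The trivial summand in $N_{\ell_2/Q_{k+1}}$, i.e.\ the difference between the quadric and projective space, is exactly what your argument exploits and what, in the paper's language, prevents two general points of $Q_{k+1}$ from being joined by a line. Your remark that the normal-bundle computation is unaffected by passing to the open subset $U$ is also correct, since the twistor line is compact and contained in $U$.
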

\begin{proof}
We shall prove that $Y=\C\!P^k\times Q_{k+1}$ does not admit an embedded Riemann sphere whose normal bundle is $2k\ol(1)$\,.
Indeed, let $L_1$ and $L_2$ be the restrictions to $\C\!P^k$ and $Q_{k+1}$ of the duals of the tautological line bundles on $\C\!P^k$ and $\C\!P^{k+2}$,
respectively. We have that both $L_1$ and $L_2$ are very ample and, also, $K_{\C\!P^k}^*=(L_1)^{k+1}$\,, $K_{Q_{k+1}}^*=(L_2)^{k+1}$
(for the latter, use the adjunction formula mentioned in \cite[p.\ 147]{GriHar}\,).
Thus, on denoting by $\p_1$ and $\p_2$ the projections from $Y$ onto its factors, respectively, we obtain that, also,
$L=\p_1^*L_1\otimes\p_2^*L_2$ is very ample, and $K_Y^*=L^{k+1}$. Therefore if $Y$ would admit
an embedded Riemann sphere $t$ whose normal bundle is $2k\ol(1)$ then $L|_t=\ol(2)$\,.
On embedding $Y$ into the projectivisation of the dual of $H^0(Y,L)$\,, we obtain that $t$ has degree two and therefore it is a conic.
It follows that any two points of $Y$ are joined by a conic.
But, according to \cite{PIonRus-cc}\,, $Y$ cannot have this property, thus completing the proof.
\end{proof}

\end{document}